\renewcommand{\hat}{\widehat}
\renewcommand{\leq}{\leqslant}
\renewcommand{\geq}{\geqslant}
  \DeclareMathOperator{\diam}{diam}
\theoremstyle{plain}
  \newtheorem{lemma}{Lemma}
  \newtheorem{theorem}[lemma]{Theorem}
  \theoremstyle{definition}
\newcommand{\jnote}[1]{}
\newcommand{\remove}[1]{}
\newcommand{\1}{\mathbf{1}}
\begin{document}

\title{A note on mixing times of planar random walks}

\author{James R. Lee}
\thanks{J.L. partially supported by NSF CCF-0915251 and a Sloan Research Fellowship.  This work
was completed during a visit of the authors to Microsoft Research}
\address{Computer Science \& Engineering, University of Washington}
\email{jrl@cs.washington.edu}
\author{Teng Qin}
\address{Computer Science, \'Ecole normale sup\'erieure, Paris}
\email{teng.qin@ens.fr}

\begin{abstract}
We present an infinite family of finite planar graphs $\{X_n\}$ with degree at most five
and such
that for some constant $c > 0$,
$$
\lambda_1(X_n) \geq c\left(\frac{\log \diam(X_n)}{\diam(X_n)}\right)^2\,,
$$
where $\lambda_1$ denotes the smallest non-zero eigenvalue of the graph Laplacian.
This significantly simplifies a construction of Louder and Souto.

We also remark that such a lower bound cannot hold
when the diameter is replaced by the average squared distance:
There exists a constant $c > 0$
such that for any family $\{X_n\}$ of planar graphs
we have
$$
\lambda_1(X_n) \leq c \left(\frac{1}{|X_n|^2} \sum_{x,y \in X_n} d(x,y)^2\right)^{-1}\,,
$$
where $d$ denotes the path metric on $X_n$.
\end{abstract}

\maketitle

\noindent
Recently, Louder and Souto \cite{LS12} answered negatively a question of Benjamini and Curien \cite{BC11}
by showing that there is an infinite family of bounded-degree planar graphs for which the mixing time is asymptotically
less than the square of the diameter.  Their construction uses expander graphs to construct
a surface which they triangulate to arrive at a planar graph.  We present a simple family of graphs
exhibiting the same result.

\section{The Laplacian}

For a finite, undirected graph $G=(V,E)$ with edge and vertex weights $w~:~E~\to~[0,\infty)$ and $\pi : V \to (0,\infty)$, one defines the combinatorial Laplacian $L : \ell^2(V,\pi) \to \ell^2(V,\pi)$ by
$$
Lf(x) = \sum_{y : \{x,y\} \in E} \frac{w(x,y)}{\pi(x)} (f(x) - f(y))\,.
$$
Here, $\ell^2(V,\pi)$ is equipped with the inner product $\langle f,g\rangle_{\ell^2(V,\pi)} = \sum_{x \in V} \pi(x) f(x) g(x)$.
This coincides with the unweighted combinatorial Laplacian when $w \equiv 1$ and $\pi \equiv 1$.

Recall that the smallest non-zero eigenvalue of $L$ as an operator on $\ell^2(V,\pi)$ is given by
$$
\lambda_1(G) = \min_{f \in \ell^2(V,\pi)} \left\{ \frac{\sum_{\{x,y\} \in E} w(x,y) (f(x)-f(y))^2}{\sum_{x \in V} \pi(x) f(x)^2} : \sum_{x \in V} \pi(x) f(x) = 0 \right\}\,.
$$
The Cheeger constant of $G$ is defined by
$$
h(G) = \min_{S \subseteq V} \left\{ \frac{\sum_{\{x,y\} \in E} w(x,y) |\1_S(x)-\1_S(y)|}{\pi(S)} : \pi(S) \leq \pi(V)/2 \right\},
$$
where $\1_S$ denotes the characteristic function of $S$ and $\pi(S) = \sum_{v \in V} \pi(v)$ for a subset $S \subseteq V$.

We recall the discrete Cheeger inequality
\begin{equation}\label{eq:cheeger}
\lambda_1(G) \geq \frac{h(G)^2}{2\, d_{\max}},
\end{equation}
where $d_{\max} = \max_{x \in V} \left(\pi(x)^{-1} \sum_{y : \{x,y\} \in E} w(x,y)\right)$ is the maximum ``degree'' in $G$.

%We will use the notation $A \lesssim B$ to denote $A = O(B)$ and
%A \asymp B$ to denote the conjunction of $A \lesssim B$ and $B \lesssim A$.
For a graph $G$, we use $V(G)$ and $E(G)$ to denote the vertex
and edge sets of $G$, respectively.
Unless otherwise stated, the edge and vertex weights on a graph $G$
are taken to be uniform.

\section{The construction}

Let $T_h$ denote the complete, rooted binary tree of height $h$.  Let $T_{h,k}$ denote the result
of subdividing every edge of $T_h$ by $k$.  If $T_{h,k}$ has root $r$, we write
$$
V_{\ell} = \{v \in V(T_{h,k}) : \mathrm{dist}(v,r) = \ell \}\,
$$
for the set of nodes at distance $\ell$ from $r$.
Fix an in-order traversal of $T_{h,k}$.
For each $\ell=0,1,\ldots,hk$, we add a path $P_{\ell}$ on the nodes at depth $\ell$.
Specifically, edges go between consecutive nodes of $V_{\ell}$ in the in-order traversal.
Call this final graph $\hat T_{h,k}$.  It is straightforward to verify that $\hat T_{h,k}$ is planar.

\begin{theorem}
For every $h \geq 1$ and $k=2^h$, the following bounds hold:
\begin{enumerate}
\item $\diam(\hat T_{h,k}) \geq hk$, and
\item $\lambda_1(\hat T_{h,k}) \geq \frac{1}{7 k^2} \geq \left(\frac{\log_2 \diam(\hat T_{h,k})}{6 \diam(\hat T_{h,k})}\right)^2$\,.
\end{enumerate}
\end{theorem}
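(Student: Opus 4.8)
The plan is to handle part (1) and the second inequality of part (2) by direct estimates, and to obtain the spectral lower bound $\lambda_1(\hat T_{h,k}) \ge \frac{1}{7k^2}$ via a Poincar\'e inequality adapted to the layered structure of $\hat T_{h,k}$. For (1), note that every edge changes the value of $\mathrm{dist}(\cdot,r)$ by at most one (the level paths $P_\ell$ by zero, the subdivided tree edges by exactly one), so any path from the root $r$ to a leaf has at least $hk$ edges; since a path of length exactly $hk$ exists, $\diam(\hat T_{h,k})\ge hk$. The chain $\frac{1}{7k^2}\ge\big(\frac{\log_2\diam}{6\diam}\big)^2$ then reduces, after substituting $\diam\ge hk=k\log_2 k$ and using that $t\mapsto(\log_2 t)/t$ is decreasing for $t\ge 3$ (with $h=1$ checked by hand), to the elementary inequality $6h/\sqrt 7\ge h+\log_2 h$, valid for all $h\ge 1$.

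For the eigenvalue bound I would deliberately avoid the Cheeger inequality \eqref{eq:cheeger}: removing the bottom block of levels shows $h(\hat T_{h,k})\lesssim 1/k$, and with $d_{\max}=5$ this yields only $\lambda_1\gtrsim 1/(10k^2)$, which is too weak for the stated constant. Instead, with $\pi\equiv 1$ and a mean-zero $f$ fixed, I would split the variance along levels. Writing $\bar f_\ell=|V_\ell|^{-1}\sum_{x\in V_\ell}f(x)$, the exact identity
$$\sum_x f(x)^2=\sum_\ell\sum_{x\in V_\ell}\big(f(x)-\bar f_\ell\big)^2+\sum_\ell |V_\ell|\,\bar f_\ell^2$$
separates a within-level term and a between-level term, while $\mathcal E(f)=\sum_{\{x,y\}\in E}(f(x)-f(y))^2$ splits over the disjoint sets of level-path edges and subdivided-tree edges. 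The within-level term is the easy one: applying the one-dimensional Poincar\'e inequality on each path $P_\ell$, and using that $P_\ell$ has $|V_\ell|\le 2^h=k$ vertices (so its Poincar\'e constant is at most $k^2$), bounds it by $k^2$ times the energy carried by the level-path edges, a contribution with constant well below $7$.

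The crux is the between-level term $\sum_\ell |V_\ell|\bar f_\ell^2$. Here I would first record the averaging estimate
$$\big(\bar f_{\ell+1}-\bar f_\ell\big)^2\le\frac{1}{|V_{\ell+1}|}\sum_{\substack{\{x,y\}:\,x\in V_\ell,\ y\in V_{\ell+1}}}\big(f(x)-f(y)\big)^2,$$
the sum being over subdivided-tree edges between the two levels; this follows from Cauchy--Schwarz together with the fact that each node of $V_{\ell+1}$ has a unique parent in $V_\ell$ (each parent having one child within a block and two children across a branching level). Summing reduces the between-level bound to a weighted one-dimensional Poincar\'e inequality for the sequence $(\bar f_\ell)_{\ell=0}^{hk}$ on the path $\{0,\dots,hk\}$, with vertex weights $|V_\ell|$ and edge conductances $|V_{\ell+1}|$, under the constraint $\sum_\ell|V_\ell|\bar f_\ell=0$.

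I expect this weighted Poincar\'e estimate to be the main obstacle, and the point where the hypothesis $k=2^h$ is essential. The level widths $|V_\ell|$ grow like $2^{\lceil\ell/k\rceil}$, whereas the edge resistances $1/|V_{\ell+1}|$ decay like $2^{-\lceil\ell/k\rceil}$, and $k=2^h$ is precisely the balance that makes the Muckenhoupt/Hardy product (tail mass)$\times$(resistance to the weighted median) uniformly of order $k^2$ across all blocks, with the geometric factors $2^{i}$ and $2^{-i}$ cancelling. A careful, rather than generic, evaluation of this product---sharpening the usual factor-$4$ Hardy bound---should give a between-level Poincar\'e constant at most $7k^2$. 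Combining the within-level and between-level bounds, and using that $\mathcal E(f)$ is the sum of its path-edge and tree-edge parts, then yields $\sum_x f(x)^2\le 7k^2\,\mathcal E(f)$, that is, $\lambda_1(\hat T_{h,k})\ge 1/(7k^2)$.
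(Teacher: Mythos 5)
Your setup is exactly the paper's: you average $f$ over the level sets $V_\ell$, split $\|f\|^2$ orthogonally into a within-level and a between-level part, control the within-level part by the spectral gap of the horizontal paths $P_\ell$ (each on at most $k$ vertices), and control the transfer to the tree edges by Cauchy--Schwarz using the unique-parent map (this is the paper's ``Jensen'' step, and your averaging estimate is correct as stated). The diameter bound and the reduction of the second inequality in (ii) to $1+(\log_2 h)/h \le 6/\sqrt{7}$ are also fine. The problem is that the step you yourself flag as ``the main obstacle''---the weighted Poincar\'e inequality for the sequence $(\bar f_\ell)_{\ell=0}^{hk}$ on the path with vertex weights $|V_\ell|$ and conductances $|V_{\ell+1}|$---is never proved. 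You appeal to a Muckenhoupt/Hardy evaluation, concede that the generic bound (with its factor-$4$ loss) is insufficient for the constant $1/(7k^2)$, and then assert that a ``careful, rather than generic'' evaluation ``should'' close the gap. That assertion is the entire theorem: everything else in your argument is routine, and a rough computation of the Muckenhoupt product on the blocks below the weighted median gives roughly $2k^2$, so the off-the-shelf bound yields only about $1/(8k^2)$. Without the sharpened computation actually carried out, the proof is incomplete at its crux.

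For comparison, the paper closes this step with a short structural observation rather than a direct one-dimensional estimate: the weighted quotient path is the $k$-subdivision of a \emph{fixed} weighted graph $Q_h$ on $\{0,1,\dots,h\}$ with vertex weights $\pi(j)=2^j$ and edge weights $w(j,j+1)=2^{j+1}$, so its gap is $\lambda_1(Q_h)/k^2$; the $k^2$ is thus extracted exactly, and the only estimate needed is $\lambda_1(Q_h)\ge 1/6$, which follows from the Cheeger inequality \eqref{eq:cheeger} applied to the \emph{small} graph $Q_h$: vertex $h$ carries more than half the weight, so any admissible $S$ has $j=\max(S)<h$, $\pi(S)\le 2^{j+1}$, and the edge $(j,j+1)$ of weight $2^{j+1}$ leaves $S$, giving $h(Q_h)\ge 1$ and $d_{\max}(Q_h)=3$. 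Note that the quadratic loss in Cheeger is harmless there because it only hits an absolute constant, not the $k$-dependence---this is why the paper can use Cheeger even though, as you correctly observe, applying it globally to $\hat T_{h,k}$ (where $h(\hat T_{h,k})=O(1/k)$ and $d_{\max}=5$) cannot reach the stated constant. If you want to salvage your route, you must either carry out the sharpened Hardy computation for these specific geometric weights, or replace it with a reduction of this subdivision-plus-small-graph type.
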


\begin{proof}
The diameter bound is (i) clear, so we focus on (ii).

Let $V = V(\hat T_{h,k})$.
Consider any $f : V \to \mathbb R$
with $\sum_{x \in V} f(x) = 0$.  Define $\bar f : V \to \mathbb R$ as follows:
For $x \in V_{\ell}$,
$$\bar f(x) = \frac{1}{|V_{\ell}|} \sum_{x \in V_{\ell}} f(x)\,.$$
Observe that $\sum_{x \in V} \bar f(x)=0$ holds as well.

Now, from \eqref{eq:cheeger} applied to the graph $P_{\ell}$, which has $h(P_{\ell}) \geq 2/|V_{\ell}|$, we have
$$
\sum_{\{x,y\} \in P_{\ell}} (f(x)-f(y))^2 \geq \frac{1}{|V_{\ell}|^2} \sum_{x \in V_{\ell}} (f(x)-\bar f(x))^2\,.
$$
Using $|V_{\ell}| \leq 2^h$ and summing over all $\ell$ yields
\begin{equation}\label{eq1}
\sum_{\ell=1}^h \sum_{\{x,y\} \in E(P_{\ell})} (f(x)-f(y))^2 \geq 2^{-2h} \|f-\bar f\|^2 \geq \frac{1}{k^2} \|f-\bar f\|^2\,.
\end{equation}

That covers the ``horizontal'' edges of $\hat T_{h,k}$.  We claim that the following bound holds for the ``vertical'' edges:
\begin{equation}\label{eq2}
\sum_{\{x,y\} \in E(T_{h,k})} (\bar f(x)-\bar f(y))^2 \geq \frac{1}{6 k^2} \|\bar f\|^2\,.
\end{equation}
Since $\sum_{x \in V} \bar f(x)=0$ and $\bar f$ is constant on the level sets $\{V_{\ell}\}$, this is implied
by a lower bound on the spectral gap of the weighted quotient graph $Q_{h,k}$ of $\hat T_{h,k}$, where each level set $V_{\ell}$ is
identified to a single vertex of weight $|V_{\ell}|$.

But $Q_{h,k}$ is simply a $k$-subdivision of the weighted graph $Q_h$
which has vertex set $\{0,1,\ldots,h\}$,
edge weights $w(j,j+1)=2^{j+1}$, and vertex weights $\pi(j)=2^j$.
In particular, $\lambda_1(Q_{h,k}) = \frac{1}{k^2} \lambda_1(Q_h)$.
Hence \eqref{eq2} is implied by the lower bound
$$
\lambda_1(Q_h) = \min_{g : \{0,1,\ldots,h\} \to \mathbb R} \left\{ \frac{\sum_{j=0}^{h-1} 2^{j+1} (g(j)-g(j+1))^2}{\sum_{j=0}^h 2^j g(j)^2} :
\sum_{j=0}^{h} 2^j g(j)=0 \right\} \geq \frac{1}{6}\,.
$$
But this follows from \eqref{eq:cheeger}, because $h(Q_h) \geq 1$, which
is easily verified:  Any $S$ of weight at most half cannot contain $h$,
which has $\pi(h) > \pi(V(Q_h))/2$.
Thus for any such $S \subseteq \{0,1,\ldots,h-1\}$, if
$j = \max(S)$, then $\pi(S) \leq 2^{j} + 2^{j-1} + \cdots + 1 \leq 2^{j+1}$,
while the edge $(j,j+1)$ leaves $S$ and has $w(j,j+1)=2^{j+1}$.

Finally, we claim that
\begin{equation}\label{eq3}
\sum_{\{x,y\} \in E(T_{h,k})} (f(x)-f(y))^2 \geq \sum_{\{x,y\} \in E(T_{h,k})} (\bar f(x)-\bar f(y))^2\,.
\end{equation}
This follows by applying Jensen's inequality to the edges from $V_{\ell}$ to $V_{\ell+1}$ for each value of $\ell$.
Now summing lines \eqref{eq1} and \eqref{eq3} and using \eqref{eq2} yields
$$
\sum_{\{x,y\} \in E(\hat T_{h,k})} (f(x)-f(y))^2 \geq \frac{1}{k^2} \left(\frac{\|\bar f\|^2}{6} + \|f-\bar f\|^2\right)
\geq \frac{1}{7 k^2} \|f\|^2\,,
$$
completing the proof.
\end{proof}

\section{Conclusion}

As Yuval Peres pointed out to us, one can check that the mixing time of simple random walk on
$\hat T_{h,k}$ is on the order of $hk^2$ for $k=2^h$, which is a $\log |V(\hat T_{h,k})|$ factor larger
than the relaxation time (i.e., the inverse spectral gap).  Observe that although the diameter of $\hat T_{h,k}$
is $hk$, the average distance between a uniformly random pair of vertices is bounded by $O(k)$.
And indeed, it is true in general that in a bounded-degree
planar graph, the mixing time is at least the average of the squared distance.

\begin{theorem}[\cite{BLR10}]
For some constant $c > 0$, the following holds.
Let $G=(V,E)$ be a planar graph with path metric $d$.
Then,
$$
\lambda_1(G) \leq c \left(\frac{1}{|V|^2} \sum_{x,y \in V} d(x,y)^2\right)^{-1}\,.
$$
\end{theorem}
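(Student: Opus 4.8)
The plan is to produce a good vector-valued test function and feed it into the variational formula for $\lambda_1$. For any map $F : V \to \mathbb{R}^m$ each of whose coordinates has mean zero, applying the Rayleigh-quotient bound to each coordinate and summing gives $\lambda_1(G)\sum_{x} \|F(x)\|^2 \le \sum_{\{x,y\}\in E}\|F(x)-F(y)\|^2$. To dispense with the mean-zero constraint I would start from an arbitrary $F$ and recenter it by its barycenter $\bar F = \frac1{|V|}\sum_x F(x)$; this leaves all differences $F(x)-F(y)$ unchanged, and the identity $\sum_x\|F(x)-\bar F\|^2 = \frac1{2|V|}\sum_{x,y\in V}\|F(x)-F(y)\|^2$ converts the bound into the translation-invariant inequality
$$
\lambda_1(G) \;\le\; \frac{2|V|\,\sum_{\{x,y\}\in E}\|F(x)-F(y)\|^2}{\sum_{x,y\in V}\|F(x)-F(y)\|^2}\,.
$$

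It remains to choose $F$ making the numerator small and the denominator large. I would take $F$ to be \emph{non-expansive}, i.e.\ $\|F(x)-F(y)\|\le d(x,y)$ for all $x,y$; then each edge contributes at most $d(x,y)^2 = 1$, so the numerator is at most $2|V|\cdot|E|$. Here planarity enters through the single inequality $|E|\le 3|V|-6\le 3|V|$, which bounds the numerator by $6|V|^2$ and, notably, uses no bound on the vertex degrees. For the denominator I would demand that $F$ also spread the metric in the mean-square sense, $\sum_{x,y}\|F(x)-F(y)\|^2 \ge c'\sum_{x,y} d(x,y)^2$ for an absolute constant $c'>0$. Granting such an $F$, the displayed inequality immediately gives $\lambda_1(G)\le \frac{6}{c'}\bigl(\frac1{|V|^2}\sum_{x,y} d(x,y)^2\bigr)^{-1}$, which is the assertion with $c=6/c'$.

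Thus everything reduces to the one substantive input: every planar-graph metric admits a non-expansive Hilbert-space embedding of \emph{constant mean-square average distortion}, i.e.\ an $F$ satisfying both requirements simultaneously. This is the crux and the main obstacle, and it is exactly where planarity is indispensable (the inequality fails for expanders, whose metrics cannot be spread this efficiently). I would build $F$ from the padded random partitions available for minor-closed families (Klein--Plotkin--Rao): at each dyadic scale $\Delta$ one has a random partition into clusters of diameter $O(\Delta)$ in which a fixed point lies, with constant probability, at distance $\gtrsim \Delta$ from the boundary of its own cluster, so the coordinate ``distance to the cluster boundary'' resolves a typical pair at distance $\approx\Delta$ up to a constant factor. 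The genuinely delicate point---and the reason this is harder than it first appears---is to combine the scales into a \emph{single} non-expansive map without incurring the $\Theta(\sqrt{\log|V|})$ factor that the worst-case Euclidean distortion of planar metrics would force; this is possible only because the estimate is required on average over pairs rather than for every pair, and it is precisely the content of the cited bound of \cite{BLR10}.
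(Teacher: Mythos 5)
Your proposal is correct and follows essentially the same route as the paper: both arguments plug a non-expansive, constant average-distortion embedding (the cited result of \cite{BLR10}) into the recentered Rayleigh quotient and invoke planarity only through $|E| \leq 3|V|$. The only cosmetic difference is that you allow a vector-valued map $F : V \to \mathbb{R}^m$ where the paper uses the real-valued map furnished directly by \cite[Thm 4.4]{BLR10}, and you correctly identify that the substantive input---the existence of such an embedding---is exactly the content of the cited theorem rather than something to be reproved here.
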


\begin{proof}
By \cite[Thm 4.4]{BLR10}, there exists a universal constant $C > 0$ and a 1-Lipschitz mapping $f : V \to \mathbb R$
such that $$\sum_{x,y \in V} |f(x)-f(y)|^2 \geq C \sum_{x,y \in V} d(x,y)^2\,.$$
By a standard calculation using the fact that $|E| \leq 3|V|$, one has
$$
\lambda_1(G) \leq \frac{\sum_{\{x,y\} \in E} |f(x)-f(y)|^2}{\frac{1}{2|V|} \sum_{x,y \in V} |f(x)-f(y)|^2} \leq \frac{(3|V|)(2|V|)}{C \sum_{x,y \in V} d(x,y)^2}\,,
$$
completing the proof.
\end{proof}
We remark that a similar bound holds for any graph of bounded genus, or more generally, for any graph excluding a fixed minor; see \cite{BLR10}.
For families of graphs of unbounded degree, one should consider the normalized Laplacian $\mathcal L f(x) = \sum_{y : \{x,y\} \in E} (\pi(x) \pi(y))^{-1/2} (f(x)-f(y))$,
where $\pi(x)$ denotes the stationary probability of $x \in V$.  A similar argument shows that for some $c > 0$,
$$\lambda_1(\mathcal L) \leq c\left(\sum_{x,y \in V} \pi(x) \pi(y) d(x,y)^2\right)^{-1}\,,$$
implying that, in planar graphs, the mixing time is at least the average squared distance
when points are chosen uniformly from the stationary measure.

\bibliographystyle{abbrv}
\bibliography{gap}

\end{document}